\theoremstyle{thmstyleone}%
\newtheorem{theorem}{Theorem}
\newtheorem{lemma}{Lemma}
\newtheorem{cor}{Corollary}
\theoremstyle{thmstyletwo}%
\newtheorem{example}{Example}%
\theoremstyle{thmstylethree}%
\newtheorem{definition}{Definition}%
\newcommand{\Fcal}{\mathcal{F}} 
\begin{document}

\title[Interpolative Metrics Are not new]
      {Interpolative Metrics Are Not New: A Study of Generalized Contractions in \( b \)-Suprametric  Spaces}


\author*[1]{\fnm{Hassan} \sur{Khandani}}\email{hassan.khandani@iau.ac.ir, khandani.hassan@gmail.com}



\affil*[1]{\orgdiv{Department of mathematics, Mah.C }, \orgname{Islamic Azad University}, \orgaddress{\street{University street}, \city{Mahabad}, \postcode{5915873549}, \state{West Azerbaijan province}, \country{Iran}}}




\abstract{Researchers recently introduced interpolative metric spaces and established fixed point theorems in this setting. We demonstrate that these metrics are a special case of \( b \)-metrics. On the other hand, suprametrics and \( b \)-suprametrics have also been introduced, and we show that \( b \)-suprametric spaces generalize \( b \)-metric spaces. We establish corresponding results previously presented in interpolative metric spaces in the framework of \( b \)-suprametric spaces with weaker conditions.}

\keywords{b-suprametric space, \(b\)-metric,  \(\Fcal\)-metric, Contraction-type mapping, Matkowski's fixed point theorem.}


\pacs[MSC Classification]{47H09; 47H10}

\maketitle

\section{Introduction}\label{sec1}
The Banach contraction principle is a cornerstone of fixed point theory, with numerous extensions and applications in nonlinear analysis and applied mathematics. While classical metric spaces rely on the strict triangle inequality, many real-world applications involve relaxed notions of distance, where this inequality is weakened. These generalized metrics, such as \( b \)-metrics and suprametrics, have become central to modern research. Establishing versions of the Banach contraction principle in these settings remains an active area of investigation. In this study, we present fixed point results for generalized contractions within these metric frameworks.\\
The concept of \( b \)-metrics was introduced by Bakhtin \cite{bakhtin1989contraction}, who defined quasi-metrics and extended the Banach fixed point theorem to these spaces. Later, Czerwik \cite{czerwik1993contraction} formalized the notion of \( b \)-metrics and extended Matkowski's fixed point theorem to \( b \)-metric spaces (see also \cite{kajanto2018note}). For further details on \( b \)-metric spaces, we refer the reader to \cite{karapinar2018short, berinde2022early}.\\
Berzig \cite{berzig2022first} introduced suprametrics, defined by the inequality \eqref{defbsupra} in Definition \ref{metrics} with \( s = 1 \), and demonstrated their applications to nonlinear matrix and integral equations. Building on this work, Berzig \cite{berzig2024nonlinear} defined \( b \)-suprametrics, a novel framework that generalizes and unifies both \( b \)-metrics and suprametrics. Furthermore, Berzig extended Matkowski's fixed point theorem to \( b \)-suprametric spaces (see Theorem \ref{berzic-matko}). For applications of suprametrics in various contexts, we refer the reader to \cite{panda2023extended, panda2023numerical}.

While \( b \)-metrics share some properties with classical metrics, such as non-negativity and symmetry, they differ significantly due to the relaxed triangle inequality. For instance, as shown in \cite{lu2019fundamental}, there exist \( b \)-metrics that are not continuous. This behavior is also observed \( b \)-suprametrics. Similarly, Example 2.10 in \cite{berzig2024nonlinear} provides a \( b \)-suprametric that is not continuous in each variable. In contrast, a suprametric is always continuous in each variable \cite{berzig2022first}. To address this limitation, Kirk and Shahzad \cite{kirk2014fixed} introduced the concept of \textbf{strong \( b \)-metrics}, which are continuous. Building on this idea, Berzig \cite{berzig2024strong} introduced \textbf{strong \( b \)-suprametrics} and showed that they are also continuous. Due to these variations, it becomes essential to define continuity explicitly for these generalized metrics. Cobzas et al. \cite{cobzas2020completion} introduced the concept of continuity for \( b \)-metrics, as formalized in Definition \ref{continuity}. This definition will be the foundation for all other metrics discussed throughout this manuscript.
\subsection{Preliminary Definitions and Results}
Before proceeding further, we first review some preliminary concepts and results that are central to this manuscript, as outlined below:

\begin{definition}\label{metrics}
The function \( \Delta: X \times X \to \mathbb{R}^{+} \) is called a semi-metric if it satisfies the following two conditions:
\begin{enumerate}
    \item \( \Delta(x, y) \geq 0 \) for all \( x, y \in X \), and \( \Delta(x, y) = 0 \) if and only if \( x = y \),
    \item \( \Delta(x, y) = \Delta(y, x) \) for all \( x, y \in X \).
\end{enumerate}
Furthermore, \( \Delta\) is called:
\begin{enumerate}[resume]
    \item \textbf{a \( b \)-metric} if there exists a constant \( s \geq 1 \) such that for all \( x, y, z \in X \),
    \[
    \Delta(x, y) \leq s \left( \Delta(x, z) + \Delta(z, y) \right).
    \]
    In this case, \( (X, \Delta) \) is called a \( b \)-metric space, and \( s \) is called the index of the \( b \)-metric space.
    \item \textbf{a strong \( b \)-metric} if there exists a constant \( s \geq 1 \) such that for all \( x, y, z \in X \),
    \[
    \Delta(x, y) \leq \Delta(x, z) + s \Delta(z, y).
    \]
    In this case, \( (X, \Delta) \) is called a strong \( b \)-metric space, and \( s \) is called the index of the strong \( b \)-metric space.
    \item \textbf{a \( b \)-suprametric} if there exist constants \( s \geq 1 \), \( c \geq 0 \) such that for all \( x, y, z \in X \),
    \[
    \Delta(x, y) \leq s \big( \Delta(x, z) + \Delta(z, y) \big) + c \, \Delta(x, z) \, \Delta(z, y).
    \]
    In this case, \( (X, \Delta) \) is called a \( b \)-suprametric space. If \( s = 1 \), then \( d \) is called a suprametric.
    \item \textbf{a strong \( b \)-suprametric} if there exist constants \( s \geq 1 \), \( c \geq 0 \) such that for all \( x, y, z \in X \),
    \begin{equation}\label{defbsupra}
    \Delta(x, y) \leq s \Delta(x, z) + \Delta(z, y) + c \, \Delta(x, z) \, \Delta(z, y).
    \end{equation}
    In this case, \( (X, \Delta) \) is called a strong \( b \)-suprametric space.
    \item \textbf{an interpolative metric} if there exist constants \( \alpha \in (0, 1) \) and \( c \geq 0 \) such that for all \( x, y, z \in X \),
    \[
    \Delta(x, y) \leq \Delta(x, z) + \Delta(z, y) + c \left( \Delta(x, z)^\alpha \cdot \Delta(z, y)^{1-\alpha} \right).
    \]
    In this case, \( (X, \Delta) \) is called an interpolative metric space.
\end{enumerate}
A sequence \( \{x_n\} \) in \( X \) converges to a point \( a \in X \) if \( \lim_{n \to \infty} \Delta(x_n, a) = 0 \). A sequence \( \{x_n\} \) in \( X \) is called a Cauchy sequence if for every \( \epsilon > 0 \), there exists \( m_0 \in \mathbb{N} \) such that \( \Delta(x_n, x_m) < \epsilon \) for all \( n, m > m_0 \). The space \( (X, \Delta) \) is called complete if every Cauchy sequence in \( X \) converges to a point in \( X \). A subset $Y\subset X$ is called bounded if there exists $M>0$ such that $\Delta(x,y)\leq M$ for all $x,y\in Y$.
\end{definition}
\begin{definition}[Picard Sequence and Bounded Orbits]
Let $(X,\Delta)$ be a $b$-metric space and $T: X\to X$ a mapping. For any $x \in X$, the \emph{Picard sequence} (or \emph{orbit}) of $T$ based at $x$ is the sequence $\{T^n x\}_{n=0}^\infty$ defined by:
\[
T^{n+1}x = T(T^n x), \quad \text{where} \quad T^0 x = x.
\]
We denote this sequence by $\mathcal{O}(x)$. 

The orbit $\mathcal{O}(x)$ is called \emph{bounded} if it is a bounded subset of $X$ in the sense of Definition \ref{metrics}. 
\end{definition}
\begin{definition}\label{continuity}[Cobzas et al., \cite{cobzas2020completion}]
The metric \( d \) is called \textbf{separately continuous} if
\[
\Delta(y_n, y) \to 0 \implies \Delta(x, y_n) \to \Delta(x, y);
\]
\( d \) is called \textbf{jointly continuous} if
\[
\Delta(x_n, x) \to 0 \text{ and } \Delta(y_n, y) \to 0 \implies \Delta(x_n, y_n) \to \Delta(x, y);
\]
for all sequences \( (x_n), (y_n) \in X \) and all \( x, y \in X \).
\end{definition}

\begin{definition}
Let \( \theta : \mathbb{R}^{+} \to \mathbb{R}^{+} \) be a non-decreasing mapping satisfying one of the following conditions:
\begin{itemize}
    \item[\((\Theta_1)\)] \( \theta^{n}(t) \to 0 \) as \( n \to \infty \) for each \( t > 0 \).
    \item[\((\Theta_2)\)] \( \sum_{n=0}^{\infty} \theta^{n}(t) < +\infty \) for each \( t > 0 \).
\end{itemize}
The set of all non-decreasing mappings satisfying \( (\Theta_1) \) and \( (\Theta_2) \) is denoted by \( \Theta_1 \) and \( \Theta_2 \), respectively.
\end{definition}

It is well known that if \( \theta \) is non-decreasing and \( (\Theta_1) \) holds, then \( \theta(t) < t \) for each \( t > 0 \) (see Lemma 2.1 in \cite{samet2012fixed}). In honor of Matkowski, the class of functions denoted by \( \Theta_1 \) is referred to as the set of Matkowski's functions \cite{matkowski1975integrable}. We also know that \( \Theta_2 \subset \Theta_1 \).

\begin{theorem}\label{berzic-matko}[Berzig, \cite{berzig2024nonlinear}]
Let \( (X, \Delta) \) be a complete \( b \)-suprametric space and \( T: X \to X \) be a mapping satisfying the following contraction condition:
\[
\Delta(Tx, Ty) \leq \theta(\Delta(x, y)) \text{ for all } x, y \in X,
\]
where \( \theta \in \Theta_1 \). Then \( T \) has a unique fixed point.
\end{theorem}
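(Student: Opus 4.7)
The plan is to follow the classical Matkowski strategy---Picard iteration, decay of consecutive terms, boundedness of the orbit, uniform Cauchy property via iterated contraction, and passage to the limit---while carefully absorbing the nonlinear product term $c\,\Delta(x,z)\Delta(z,y)$ coming from the $b$-suprametric triangle inequality.

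Fix $x_0 \in X$, set $x_n = T^n x_0$, and write $a_n = \Delta(x_n, x_{n+1})$. The contraction immediately gives $a_n \leq \theta^n(a_0)$, so $(\Theta_1)$ yields $a_n \to 0$. To establish boundedness of the orbit $\mathcal{O}(x_0)$, I would apply the $b$-suprametric inequality with pivot $x_1$ together with the contraction to obtain
\[
\Delta(x_0, x_n) \leq s\,\Delta(x_0, x_1) + \bigl(s + c\,\Delta(x_0, x_1)\bigr)\,\theta\!\left(\Delta(x_0, x_{n-1})\right),
\]
a scalar recursion $b_n \leq \gamma + \beta\,\theta(b_{n-1})$ with $\gamma,\beta$ depending on $s$, $c$, $\Delta(x_0,x_1)$. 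Using the monotonicity of $\theta$ together with $\theta(t) < t$ and $\theta^n(t) \to 0$, I would locate an invariant interval $[0, M]$ for this recursion, concluding $\sup_n \Delta(x_0, x_n) \leq M < \infty$. Iterating the contraction $n$ times then yields
\[
\Delta(x_n, x_{n+k}) \leq \theta^n\!\left(\Delta(x_0, x_k)\right) \leq \theta^n(M) \longrightarrow 0
\]
uniformly in $k$, which is exactly the Cauchy condition. By completeness, $x_n \to u$ for some $u \in X$.

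To identify $u$ as a fixed point, I would apply the $b$-suprametric inequality at the pivot $x_{n+1}$:
\[
\Delta(u, Tu) \leq s\,\Delta(u, x_{n+1}) + s\,\Delta(x_{n+1}, Tu) + c\,\Delta(u, x_{n+1})\Delta(x_{n+1}, Tu),
\]
and observe that $\Delta(u, x_{n+1}) \to 0$ while $\Delta(x_{n+1}, Tu) \leq \theta(\Delta(x_n, u)) \leq \Delta(x_n, u) \to 0$, forcing $\Delta(u, Tu) = 0$. Uniqueness is immediate: if $Tu = u$ and $Tv = v$ with $u \neq v$, then $\Delta(u, v) = \Delta(Tu, Tv) \leq \theta(\Delta(u, v)) < \Delta(u, v)$, a contradiction.

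The main obstacle I anticipate lies in the orbit-boundedness step. Because $\theta \in \Theta_1$ supplies only the pointwise inequality $\theta(t) < t$ (no uniform contractive ratio $q < 1$), producing an invariant interval for the recursion $b_n \leq \gamma + \beta\,\theta(b_{n-1})$ with the suprametric-inflated coefficient $\beta = s + c\,\Delta(x_0, x_1) \geq 1$ is not automatic; a naive ratio estimate fails, and one must exploit $\theta^n(t) \to 0$ more carefully, perhaps by iterating the recursion and dominating it by a Matkowski-type envelope. Once boundedness is secured, the remaining steps are quite clean, since every product contribution is controlled by the decaying factor $\theta^n(M)$ and the cross term in the suprametric inequality becomes asymptotically harmless.
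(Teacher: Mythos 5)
Your proposal is incomplete at its decisive step: the boundedness of the Picard orbit is never actually established, and you acknowledge as much. The recursion $b_n \le \gamma + \beta\,\theta(b_{n-1})$ with $\beta = s + c\,\Delta(x_0,x_1) \ge 1$ need not admit an invariant interval under $(\Theta_1)$ alone: take $\theta(t)=t/2$ and $\beta=4$, which gives only $b_n \le \gamma + 2b_{n-1}$, compatible with geometric growth; the pointwise inequality $\theta(t)<t$ is useless against the inflation factor $\beta$, and $\theta^n(t)\to 0$ does not help directly because the argument fed into $\theta$ changes with $n$. Since your Cauchy estimate $\Delta(x_n,x_{n+k}) \le \theta^n(\Delta(x_0,x_k)) \le \theta^n(M)$ requires a bound $M$ on $\Delta(x_0,x_k)$ that is uniform in $k$, the entire Cauchy argument hinges on exactly the step you leave open ("perhaps by iterating the recursion and dominating it by a Matkowski-type envelope" is a hope, not an argument). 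The remaining parts --- decay of consecutive distances, the identification of the limit as a fixed point via the pivot $x_{n+1}$ using $\Delta(x_{n+1},Tu)\le\theta(\Delta(x_n,u))\le\Delta(x_n,u)$, and uniqueness --- are correct and do not even require continuity assumptions, but they cannot stand without the Cauchy property.

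For comparison: the paper does not prove this theorem at all; it is quoted from Berzig, and when the paper later recovers it as a corollary of its own Theorem \ref{bsupra-gen-matko}, it inserts ``$T$ has bounded orbits'' as an explicit hypothesis. The proof of Theorem \ref{bsupra-gen-matko} establishes the Cauchy property by contradiction (non-Cauchy subsequences with $\Delta(T^{n(k)}x,T^{m(k)}x)\ge L$) and invokes the orbit bound at precisely the point where you need your $M$ (inequality \eqref{b5}). In other words, the step you could not complete is the very step the paper converts into a hypothesis rather than proving. To prove the theorem as stated, without a bounded-orbit assumption, you would need a genuine boundedness or Cauchy lemma for comparison functions in $b$-suprametric spaces --- in the spirit of Berzig's original argument, or of the Kaj\'ant\'o--Luk\'acs repair of Czerwik's proof in the $b$-metric case --- and your proposal does not supply one.
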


Karapinar et al. established the following Ćirić-type fixed point result in the setting of interpolative metric spaces:
\begin{theorem}[Theorem 2.2, \cite{karapinar2025some}]
Let \( 0 \leq \alpha < 1 \), \( c \geq 0 \), and \( (X, \Delta) \) be an \( (\alpha, c) \)-interpolative complete metric space. Let \( T: X \to X \) be a mapping. Suppose that there exists \( \theta \in \Theta_2 \) such that
\[
\Delta(Tx, Ty) \leq \theta(M(x, y)),
\]
for all \( x, y \in X \), where
\[
M(x, y) = \max\{\Delta(x, y), \Delta(x, Tx), \Delta(y, Ty)\}.
\]
Then, \( T \) has a unique fixed point in \( X \).
\end{theorem}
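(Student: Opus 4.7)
My plan is to leverage the paper's central observation that every interpolative metric is a $b$-metric: applying Young's inequality $a^\alpha b^{1-\alpha} \le \alpha a + (1-\alpha)b \le a+b$ to the cross term yields $\Delta(x,y) \le (1+c)\bigl(\Delta(x,z)+\Delta(z,y)\bigr)$, so $(X,\Delta)$ is a complete $b$-metric (hence $b$-suprametric) space with index $s=1+c$. All sequential notions --- convergence, Cauchy, completeness --- are imported from this framework, and the proof amounts to a Ćirić-type adaptation of Theorem~\ref{berzic-matko} in which the majorant $M(x,y)$ replaces $\Delta(x,y)$.

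Starting from $x_0 \in X$, iterate $x_{n+1}=Tx_n$ and set $\delta_n=\Delta(x_n,x_{n+1})$. Plugging $(x,y)=(x_{n-1},x_n)$ into the contraction and using $\theta(t)<t$ for $t>0$ shows, after the routine case analysis on $M(x_{n-1},x_n)=\max\{\delta_{n-1},\delta_n\}$, that either some $\delta_n=0$ (a fixed point already) or $\delta_n\le\theta(\delta_{n-1})$; monotonicity of $\theta$ then gives $\delta_n\le\theta^n(\delta_0)$, and $\theta\in\Theta_2$ supplies the summability $\sum\delta_n<\infty$ for later use.

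The core step is showing $\{x_n\}$ is Cauchy. I would proceed by contradiction, assuming there exist $\varepsilon>0$ and indices $n_k<m_k$ with $\Delta(x_{n_k},x_{m_k})\ge\varepsilon$ and $m_k$ chosen minimal. The interpolative inequality together with $\delta_n\to 0$ allows one to push limits through each cross term $c\Delta(\cdot,\cdot)^\alpha\Delta(\cdot,\cdot)^{1-\alpha}$ --- such a term carries a factor $\delta_n^\alpha$ or $\delta_n^{1-\alpha}$ that vanishes because $\alpha\in(0,1)$ --- giving first $\Delta(x_{n_k},x_{m_k})\to\varepsilon$ and then $\Delta(x_{n_k+1},x_{m_k+1})\to\varepsilon$. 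The contraction forces $\Delta(x_{n_k+1},x_{m_k+1})\le\theta(M(x_{n_k},x_{m_k}))$ with $M(x_{n_k},x_{m_k})\to\varepsilon$ from above, producing $\varepsilon\le\theta(\varepsilon^+)\le\varepsilon$; the contradiction is obtained by noting that $\theta(\varepsilon^+)=\varepsilon$ would force $\sum\theta^n(\cdot)=\infty$ along any orbit starting just above $\varepsilon$, contradicting $\theta\in\Theta_2$. The limit case $\alpha=0$, in which the interpolative inequality collapses to a strong $b$-metric form, requires a parallel but more delicate limit argument, and this Cauchy step is where I expect the main technical obstacle.

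With $\{x_n\}$ Cauchy, completeness produces a limit $x^\ast$. Applying the interpolative inequality to bound $\Delta(Tx^\ast,x^\ast)$ with $z=x_{n+1}$, the cross term $c\Delta(Tx^\ast,x_{n+1})^\alpha\Delta(x_{n+1},x^\ast)^{1-\alpha}$ vanishes in the limit because $\Delta(x_{n+1},x^\ast)\to 0$ and $1-\alpha>0$; combined with $\Delta(Tx^\ast,x_{n+1})\le\theta(M(x^\ast,x_n))$ and $M(x^\ast,x_n)\to\Delta(Tx^\ast,x^\ast)$, this yields $\Delta(Tx^\ast,x^\ast)\le\theta(\Delta(Tx^\ast,x^\ast))$, whence $Tx^\ast=x^\ast$. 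Uniqueness is immediate from the same inequality applied to two putative fixed points $x^\ast,y^\ast$, which would give $\Delta(x^\ast,y^\ast)\le\theta(\Delta(x^\ast,y^\ast))<\Delta(x^\ast,y^\ast)$.
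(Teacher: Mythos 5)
Your opening reduction (Young's inequality, so the interpolative metric is a \(b\)-metric with index \(1+c\)) is exactly the paper's Lemma~\ref{not_new}, and the Picard estimates \(\delta_n\le\theta^n(\delta_0)\) are fine. The genuine gap is at the point you yourself flag as the crux: the contradiction you extract from \(\varepsilon\le\theta(\varepsilon^+)\le\varepsilon\) does not exist. It is false that \(\theta(\varepsilon^+)=\varepsilon\) forces \(\sum_n\theta^n(t)=\infty\) for starting values just above \(\varepsilon\). Take \(\theta(t)=t/2\) for \(t\le\varepsilon\), \(\theta(t)=\varepsilon\) for \(\varepsilon<t\le2\varepsilon\), and \(\theta(t)=t-\varepsilon\) for \(t>2\varepsilon\): this \(\theta\) is non-decreasing, satisfies \(\theta(t)<t\) for all \(t>0\), and lies in \(\Theta_2\) (after finitely many steps every orbit drops to \([0,\varepsilon]\) and then decays geometrically), yet \(\inf_{t>\varepsilon}\theta(t)=\varepsilon\). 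So with such a \(\theta\) your chain of limits \(\Delta(x_{n_k},x_{m_k})\to\varepsilon\), \(\Delta(x_{n_k+1},x_{m_k+1})\to\varepsilon\), \(\Delta(x_{n_k+1},x_{m_k+1})\le\theta(M(x_{n_k},x_{m_k}))\) is perfectly consistent and the Cauchy argument stalls. Membership in \(\Theta_2\) controls iterates of \(\theta\), not its right-hand limits, so without right-continuity of \(\theta\) (or some substitute hypothesis) this step needs a different idea. Note also that the fallback of using summability directly is not free either: \(\sum\delta_n<\infty\) does not by itself yield Cauchyness in a \(b\)-metric with \(s=1+c>1\), since iterating the relaxed triangle inequality accumulates powers of \(s\).

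For comparison, the paper never proves the quoted theorem by a direct \(\varepsilon\)-minimality argument. It quotes it from the literature and later recovers a variant (Corollary~\ref{gen-iterpol}) under modified hypotheses, by first proving the Ćirić--Matkowski result in \(b\)-suprametric spaces (Theorem~\ref{bsupra-gen-matko}) with \(\theta\in\Theta_1\) but assuming in addition that orbits are bounded and that either \(T\) is continuous or \(\Delta\) is separately continuous. The bounded-orbit hypothesis is precisely what replaces your missing step: with \(M\) a bound on the orbit one gets \(\Delta(T^{n(k)+p}x,T^{m(k)+p}x)\le\theta^{p}(M)\) for all large \(k\), hence \(L\le s^{2}\theta^{p}(M)\to0\) as \(p\to\infty\); only the Matkowski property \(\theta^{p}(M)\to0\) is used, and \(\theta\) is never evaluated at a right limit. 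To repair your proof of the statement as quoted you would have to either import such a device (and then justify boundedness of the orbit, which is not automatic in a \(b\)-metric), add right-continuity of \(\theta\), or reproduce whatever additional argument the source paper uses; as written, the Cauchy step fails.
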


In this manuscript, we first demonstrate that interpolative metric spaces, recently introduced \cite{karapinara2024interpolative}, are not genuinely new metric spaces. Specifically, we prove that every interpolative metric is, in fact, a \( b \)-metric \cite{karapinara2024interpolative}. On the other hand, Berzig introduced \( b \)-suprametric spaces, which generalize \( b \)-metric spaces \cite{berzig2024nonlinear}. However, no example was provided to show that this generalization is proper (i.e., that \( b \)-suprametric spaces strictly contain \( b \)-metric spaces) \cite{berzig2024nonlinear}. We address this gap by constructing such an example and present our main results in the setting of \( b \)-suprametric spaces, which are more general than \( b \)-metric spaces.

Next, we extend the main result of Karapinar et al. \cite{karapinar2025some} to the setting of \( b \)-suprametric spaces under weaker conditions. As a key contribution, we directly derive a generalized version of Ćirić-type Matkowski fixed-point theorem in the context of \( b \)-metric spaces.

This manuscript is organized as follows. In Section \ref{mr}, we present an example to assert that suprametric and \( b \)-suprametric spaces are more general than \( b \)-metric spaces. Next, we extend Matkowski's fixed-point theorem to Ćirić-type generalized contractions within the framework of \( b \)-suprametric spaces (Theorem \ref{bsupra-gen-matko}). Then, we derive this result as a corollary for strong \( b \)-metric spaces and suprametric spaces (Corollary \ref{strong-bsupra-gen-matko}). \\
 Finally, we investigate a recently introduced type of metric known as interpolative metrics and demonstrate that they are essentially equivalent to \( b \)-metrics (Lemma \ref{not_new}). Our findings extend and generalize several existing results in the literature, as discussed in detail in the subsequent sections.
\section{Main Results}\label{mr}
First, we present  lemma \ref{not_new}, and demonstrate that every interpolative metric is a \(b\)-metric. Then, we provide Example \ref{supexm} demonstrating the generality of \( b \)-suprametric spaces over \( b \)-metric spaces. Next, we extend Matkowski's fixed point theorem to the setting of \( b \)-suprametric spaces. 

\begin{lemma} \label{not_new}
Every \(\alpha\)-\(c\) interpolative metric, where \(0<\alpha<1\) and \(c\ge 0\),  is a \(b\)-metric.
\end{lemma}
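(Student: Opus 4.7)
The plan is to bound the extra interpolative term $c\,\Delta(x,z)^{\alpha}\Delta(z,y)^{1-\alpha}$ by a linear combination of $\Delta(x,z)$ and $\Delta(z,y)$, which then collapses the interpolative inequality into a $b$-metric inequality with an explicit constant $s$.

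The key step is the weighted arithmetic-geometric mean inequality: for nonnegative reals $a,b$ and $\alpha\in(0,1)$,
\[
a^{\alpha}\,b^{1-\alpha}\;\le\;\alpha a+(1-\alpha)b\;\le\;a+b.
\]
Applying this with $a=\Delta(x,z)$ and $b=\Delta(z,y)$ gives
\[
c\,\Delta(x,z)^{\alpha}\Delta(z,y)^{1-\alpha}\;\le\;c\bigl(\Delta(x,z)+\Delta(z,y)\bigr).
\]
Substituting this into the defining interpolative inequality yields
\[
\Delta(x,y)\;\le\;(1+c)\bigl(\Delta(x,z)+\Delta(z,y)\bigr),
\]
so $\Delta$ satisfies the $b$-metric inequality with index $s=1+c\ge 1$. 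The first two semi-metric axioms (non-negativity, identity of indiscernibles, symmetry) are inherited, since an interpolative metric is by definition already a semi-metric.

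There is essentially no obstacle here: the only subtlety is verifying that the weighted AM-GM bound holds even on the boundary where $a=0$ or $b=0$, which is immediate by convention $0^{0}=0$ is avoided because $\alpha\in(0,1)$ (so neither exponent vanishes) and $0^{\alpha}=0$. Thus the proof reduces to a one-line application of Young's inequality followed by a direct substitution, producing the explicit constant $s=1+c$.
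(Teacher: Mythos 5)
Your proof is correct and follows essentially the same route as the paper: both apply Young's (weighted AM--GM) inequality to the interpolative term and then absorb it into the linear terms to obtain a $b$-metric inequality. The only difference is cosmetic: the paper keeps the slightly sharper index $s=\max\{1+c\alpha,\,1+c(1-\alpha)\}$, whereas you relax $\alpha a+(1-\alpha)b\le a+b$ to get $s=1+c$, which is equally valid for the conclusion.
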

\begin{proof}
Starting with the definition of the \(\alpha\)-\(c\) interpolative metric, we have:
\[
\Delta(x, y) \leq \Delta(x, z) + \Delta(z, y) + c \left[ (\Delta(x, z))^\alpha \cdot (\Delta(z, y))^{1-\alpha} \right].
\]
By Young's inequality, for \( \alpha \in (0, 1) \), the following holds:
\[
(\Delta(x, z))^\alpha \cdot (\Delta(z, y))^{1-\alpha} \leq \alpha \Delta(x, z) + (1-\alpha) \Delta(z, y).
\]
Substituting this into the original inequality, we obtain:
\[
\Delta(x, y) \leq \Delta(x, z) + \Delta(z, y) + c \left[ \alpha \Delta(x, z) + (1-\alpha) \Delta(z, y) \right].
\]
Expanding and grouping like terms, we derive:
\[
\Delta(x, y) \leq \left[1 + c \alpha \right] \Delta(x, z) + \left[1 + c (1-\alpha) \right] \Delta(z, y).
\]
Let \( s = \max\{1 + c \alpha, 1 + c (1-\alpha)\} \). Since \( 1 + c \alpha \) and \( 1 + c (1-\alpha) \) are both positive, we conclude:
\[
\Delta(x, y) \leq s \cdot \left[ \Delta(x, z) + \Delta(z, y) \right].
\]
 We see that $\Delta$ is a \(b\)-metric with index \(s\).
\end{proof}
It is worth noting that the following example is inspired by the work of Jleli and Samet \cite{jleli2018new}.
\begin{example}\label{supexm}
Let \(X=\mathcal{R}\) and for all \(x,y\in X\) define:
\begin{equation*}
S(x,y) = \left\{
\begin{array}{rl}
    \exp(x-y)  &   x\not=y \\
    0\quad&\text{otherwise}
\end{array} \right. \qquad
\end{equation*}
\(S\) is not a \(b\)-metric, see Example 2.4. in  \cite{jleli2018new}. For all \(x\not = y,z\in X\):
\begin{align*}
S(x,y)&=\exp(x-y)=\exp(x-z)exp(z-y)\\
&\leq \exp(x-z)+\exp(z-y)+\exp(x-z)exp(z-y)\\
&\leq S(x,z)+S(z,y)+S(x,z)S(z,y).\\
\end{align*}
This implies that \(S\) is a suprametric with \(s=c=1\).
\end{example}
We now present the following result regarding \( b\)-suprametric spaces. As previously mentioned, \( b\)-suprametric spaces generalize \( b \)-metric spaces, offering a broader framework for analysis.
\begin{theorem}\label{bsupra-gen-matko}
Let $(X, \Delta)$ be a complete $b$-suprametric space with parameter $s \geq 1$, and let $\theta \colon [0,\infty) \to [0,\infty)$ satisfy $\theta \in \Theta_1$. For a mapping $T \colon X \to X$, assume:

\begin{enumerate}[label=(\roman*)]
    \item (Contraction Condition) For all $x, y \in X$:
    \begin{equation}\label{fee-matk1}
    \Delta(Tx, Ty) \leq \theta\left(\max\left\{\Delta(x, y), \Delta(x, Tx), \Delta(y, Ty)\right\}\right)
    \end{equation}
    
    \item (Continuity) Either:
    \begin{itemize}
        \item $T$ is continuous, or
        \item $\Delta$ is continuous in each variable
    \end{itemize}
    
    \item  $T$ Has bounded orbits.
\end{enumerate}

Then:
\begin{enumerate}[label=(\alph*)]
    \item For any $x \in X$, the sequence $\{T^n x\}_{n=1}^\infty$ converges to a point $\xi \in X$
    \item $\xi$ is the unique fixed point of $T$ in $X$
\end{enumerate}
\end{theorem}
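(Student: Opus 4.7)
The plan is to follow the classical Matkowski scheme, adapted to the $b$-suprametric setting, with the bounded-orbit hypothesis supplying exactly what is needed to bypass the usual obstruction caused by the constants $s$ and $c$. Fix $x_{0}\in X$, set $x_{n}=T^{n}x_{0}$, and note that $\mathcal{O}(x_{0})$ is bounded by assumption~(iii). First I would show that $\Delta(x_{n},x_{n+1})\to 0$: applying the contraction~\eqref{fee-matk1} to $(x_{n-1},x_{n})$ gives $M(x_{n-1},x_{n})=\max\{\Delta(x_{n-1},x_{n}),\Delta(x_{n},x_{n+1})\}$; the case $\Delta(x_{n},x_{n+1})\ge\Delta(x_{n-1},x_{n})>0$ is ruled out since it would yield $\Delta(x_{n},x_{n+1})\le\theta(\Delta(x_{n},x_{n+1}))<\Delta(x_{n},x_{n+1})$ by the standard fact that $\theta(t)<t$ for $\theta\in\Theta_{1}$, $t>0$. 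Hence $\Delta(x_{n},x_{n+1})\le\theta(\Delta(x_{n-1},x_{n}))\le\theta^{n}(\Delta(x_{0},x_{1}))\to 0$.

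The main obstacle is proving $\{x_{n}\}$ is Cauchy; in $b$-suprametric spaces one cannot simply telescope through the triangle inequality because of the $s$ and $c$ factors. Here the bounded-orbit assumption saves the day. I would set $d_{n}=\sup\{\Delta(x_{i},x_{j}):i,j\ge n\}$, which is finite since $\mathcal{O}(x_{0})$ is bounded and is non-increasing in $n$. For every $i,j\ge n+1$ we have $i-1,j-1\ge n$, so
\[
M(x_{i-1},x_{j-1})=\max\{\Delta(x_{i-1},x_{j-1}),\Delta(x_{i-1},x_{i}),\Delta(x_{j-1},x_{j})\}\le d_{n},
\]
and applying~\eqref{fee-matk1} together with the monotonicity of $\theta$ yields $\Delta(x_{i},x_{j})\le\theta(d_{n})$. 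Taking the supremum over $i,j\ge n+1$ gives $d_{n+1}\le\theta(d_{n})$, whence $d_{n}\le\theta^{n}(d_{0})\to 0$. This proves $\{x_{n}\}$ is Cauchy, so by completeness $x_{n}\to\xi$ for some $\xi\in X$; the uniqueness of the limit follows from $\Delta(a,b)\le s(\Delta(a,x_{n})+\Delta(x_{n},b))+c\,\Delta(a,x_{n})\Delta(x_{n},b)\to 0$.

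It remains to verify that $\xi$ is a fixed point, which splits along hypothesis~(ii). If $T$ is continuous, then $x_{n+1}=Tx_{n}\to T\xi$, and by uniqueness of limits $T\xi=\xi$. If instead $\Delta$ is separately continuous, set $L=\Delta(\xi,T\xi)$ and suppose $L>0$. Since $\Delta(\xi,x_{n-1})\to 0$ and $\Delta(x_{n-1},x_{n})\to 0$, for all sufficiently large $n$ these two quantities are smaller than $L$, so $M(\xi,x_{n-1})=L$ exactly. Then $\Delta(T\xi,x_{n})=\Delta(T\xi,Tx_{n-1})\le\theta(L)<L$ for large $n$, contradicting $\Delta(T\xi,x_{n})\to\Delta(T\xi,\xi)=L$ guaranteed by separate continuity. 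Hence $L=0$ and $T\xi=\xi$.

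Uniqueness of the fixed point is routine: if $T\eta=\eta$ then $M(\xi,\eta)=\Delta(\xi,\eta)$, so $\Delta(\xi,\eta)\le\theta(\Delta(\xi,\eta))<\Delta(\xi,\eta)$ unless $\Delta(\xi,\eta)=0$. I expect no serious difficulty apart from the Cauchy step already discussed; the diameter argument is the essential ingredient, and it is precisely where the bounded-orbit hypothesis substitutes for the stronger structural properties a genuine $b$-metric triangle inequality would otherwise provide.
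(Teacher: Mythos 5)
Your proposal is correct, and the decisive step is argued along a genuinely different route from the paper. The paper proves the Cauchy property by contradiction: it assumes subsequences with $\Delta(T^{n(k)}x,T^{m(k)}x)\ge L>0$, shifts both indices by $p$, applies the $b$-suprametric inequality twice (producing the $s^2$ and $c$ terms in \eqref{b1}), uses the bounded-orbit hypothesis only to obtain the uniform bound $M$ in \eqref{b5}, iterates $\theta$ along the shifted pairs to get $\Delta(T^{n(k)+p}x,T^{m(k)+p}x)\le\theta^{p}(M)$, and finally lets $p\to\infty$ to force $L\le s^{2}\theta^{p}(M)\to 0$. You instead use the classical \'Ciri\'c orbital-diameter argument: with $d_{n}=\sup\{\Delta(T^{i}x,T^{j}x):i,j\ge n\}$ (finite by bounded orbits), the contraction and monotonicity of $\theta$ give $d_{n+1}\le\theta(d_{n})$, hence $d_{n}\le\theta^{n}(d_{0})\to 0$, which yields Cauchyness directly. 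This is shorter and more transparent, and it makes visible that the Cauchy step uses no triangle-type inequality at all — only the contraction and boundedness of the orbit — whereas the relaxed $b$-suprametric inequality is needed only for uniqueness of limits (and in the paper's route, throughout the contradiction argument). Your treatment of the fixed point (via continuity of $T$ or separate continuity of $\Delta$) and of uniqueness coincides with the paper's. Two trivial points you should state explicitly: dispose of the degenerate cases $Tx_{n}=x_{n}$ (equivalently $d_{0}=0$, where the orbit is eventually constant), since $(\Theta_1)$ only controls $\theta^{n}(t)$ for $t>0$; and note that your preliminary step $\Delta(x_{n},x_{n+1})\to 0$ is in fact subsumed by $d_{n}\to 0$, so it can be omitted.
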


\begin{proof}
It is well known that \(\Delta(T^{n+1}x,T^{n}x)\to 0\) as \(n\to \infty\) \cite{hussain2018existence, karapinar2025some}. But, for the sake of completeness, we present the proof.  To do this, let \(m\) be any non-negative integer. If \(T^{m+1}x = T^m x\), then \(T^{m}x\) is a fixed point of \(T\), and the sequence \(\{T^n x\}\) converges to \(T^{m}x\). This completes the proof in this case. Now, suppose that \(T^{n+1}x \neq T^n x\) for each non-negative integer \(n\). Let \(n\) be any non-negative integer; we have
\[
\Delta(T^{n+2}x,T^{n+1}x)=\theta(\max\{\Delta(T^{n}x, T^{n+1}x),\Delta( T^{n+1}x,T^{n+2}x)\}).
\]

If the \(\Delta( T^{n+1}x,T^{n+2}x)\) on the right-hand side of above inequality achieves the maximum for some \(n\), then we get:
\[
\Delta(T^{n+2}x,T^{n+1}x)\le \theta(\Delta(T^{n+2}x,T^{n+1}x))<\Delta(T^{n+2}x,T^{n+1}x),
\]
that is a contradiction. Therefore,  from \eqref{fee-matk1}, for each non-negative integer \(n\) we have:
\[
\Delta(T^{n+1}x,T^{n}x)\le \theta(\Delta(T^{n}x,T^{n-1}x)).
\]
Inductively, for each non-negative integer \(n\) we have:
\[
\Delta(T^{n+1}x,T^{n}x)\le \theta^{n}(\Delta(Tx,x)),
\]
Since \(\Delta(Tx,x)\not = 0\), the above inequality implies that 
\begin{equation}\label{asym}
\lim_{n\to\infty}\Delta(T^{n+1}x,T^{n}x)=0.
\end{equation}
 For any non-negative integer \(p\) we have:
\begin{align}\label{indoc}
\Delta(T^{n+p+1}x,T^{n}x)&\le s\{\Delta(T^{n+p+1}x,T^{n+p}x)+\Delta(T^{n+p}x,T^{n}x)\}\nonumber\\
&\quad +c\cdot \Delta(T^{n+p+1}x,T^{n+p}x)\cdot \Delta(T^{n+p}x,T^{n}x).
\end{align}
Using inequalities (\ref{asym}) and (\ref{indoc}), and induction we deduce that:
\begin{equation}\label{induc}
\lim_{n\to\infty}\Delta(T^{n+m}x,T^{n}x)=0\quad \text{ for all non-negative integer } m.
\end{equation}
Suppose, for contradiction, that the sequence $\{T^n x\}_{n\in\mathbb{N}}$ is not Cauchy. Then there exist strictly increasing sequences of positive integers $\{n(k)\}_{k\in\mathbb{N}}$ and $\{m(k)\}_{k\in\mathbb{N}}$ with $n(k), m(k) \to \infty$ as $k \to \infty$, and a constant $L > 0$ such that:
\begin{equation}\label{b0}
    \Delta\left(T^{n(k)}x, T^{m(k)}x\right) \geq L \quad \text{for all } k \in \mathbb{N}.
\end{equation}
Let \(p\) be a non-negative integer. We have:
\begin{align}
\Delta(T^{n(k)}x, T^{m(k)}x) &\le s \big\{ \Delta(T^{n(k)}x, T^{n(k)+p}x) + \Delta(T^{n(k)+p}x, T^{m(k)}x) \big\} \nonumber \\
&\quad+c{ \Delta(T^{n(k)}x, T^{n(k)+p}x)\cdot \Delta(T^{n(k)+p}x, T^{m(k)}x)}\nonumber\\
&\le s \big\{ \Delta(T^{n(k)}x, T^{n(k)+p}x) \nonumber \\
&\quad + s \big\{ \Delta(T^{n(k)+p}x, T^{m(k)+p}x) + \Delta(T^{m(k)+p}x, T^{m(k)}x)\big\}\nonumber\\
& \quad + c\Delta(T^{n(k)+p}x, T^{m(k)+p}x) \cdot \Delta(T^{m(k)+p}x, T^{m(k)}x) \big\} \nonumber \\
&\quad+c\Delta(T^{n(k)}x, T^{n(k)+p}x) \cdot \Delta(T^{n(k)+p}x, T^{m(k)}x).\nonumber
\end{align}
By rearranging the terms on the right-hand side of this inequality, we obtain:
\begin{align}\label{b1}
\Delta(T^{n(k)}x, T^{m(k)}x) &\le s \, \Delta(T^{n(k)}x, T^{n(k)+p}x) + s^{2} \, \Delta(T^{m(k)+p}x, T^{m(k)}x) \nonumber \\
&\quad + s^{2} \, \Delta(T^{n(k)+p}x, T^{m(k)+p}x)\\
&\quad+ sc\Delta(T^{n(k)+p}x, T^{m(k)+p}x) \cdot \Delta(T^{m(k)+p}x, T^{m(k)}x)\nonumber\\
&\quad+c\Delta(T^{n(k)}x, T^{n(k)+p}x) \cdot \Delta(T^{n(k)+p}x, T^{m(k)}x).\nonumber
\end{align}
If \( \liminf_{k \to \infty} \Delta(T^{n(k)+p}x, T^{m(k)+p}x) = 0 \) for some non-negative integer \( p \), then the inequality
\begin{align}\label{b12}
\Delta\left(T^{n(k)+p}x, \, T^{m(k)}x\right) 
&\leq s \Bigl[ \Delta\left(T^{n(k)+p}x, \, T^{m(k)+p}x\right) + \Delta\left(T^{m(k)+p}x, \, T^{m(k)}x\right) \Bigr] \nonumber \\
&\quad + c \cdot \Delta\left(T^{n(k)+p}x, \, T^{m(k)+p}x\right) \Delta\left(T^{m(k)+p}x, \, T^{m(k)}x\right).
\end{align}
 implies that 
\[
\liminf_{k \to \infty} \Delta(T^{n(k)+p}x, T^{m(k)}x) = 0.
\] 
Furthermore, by invoking \eqref{induc}, it can be deduced that the limit inferior of each term on the right-hand side of \eqref{b1} approaches \( 0 \) as \( k \to \infty \). Consequently, this leads to \( L = 0 \), which contradicts our initial assumption. Therefore, we conclude that:
\[
\liminf_{k\to \infty}\Delta(T^{n(k)+p}x, T^{m(k)+p}x)\not=0 \text{ for each non-negative integer } p.
\]
Fix a non-negative integer \(p\). Therefore, we may assume that:
\[
\liminf_{k\to \infty}\Delta(T^{n(k)+j}x, T^{m(k)+j}x)=\delta_{j}>0 \text{ for } j=1,2,\dots,p.
\]
Let \(\delta=\min\{\delta_1,\delta_{2},\dots,\delta_{p}\}\). We have:
\begin{align}\label{b2}
\Delta(T^{n(k)+j}x,T^{m(k)+j}x)&\le \theta\big(\max\big\{\Delta(T^{n(k)+j-1}x,T^{m(k)+j-1}x),\nonumber\\
&\quad \Delta(T^{n(k)+j-1}x,T^{n(k)+j}x), \nonumber\\
&\quad \Delta(T^{m(k)+j-1}x,T^{m(k)+j}x)\big\}\big).
\end{align}
By invoking \eqref{asym}, there exists \(k_0\) such that thefollowing two conditions hold:
\begin{align}\label{b2-3}
\liminf_{k\to \infty}\Delta(T^{n(k)+j}x, T^{m(k)+j}x)>\frac{\delta}{2}\quad\text{for all\quad}k \geq k_0\text{ and } j=1,2,\dots,p.
\end{align}
\begin{align}\label{b3}
\Delta(T^{n(k)+j-1}x,T^{n(k)+j}x) \text { and }& \Delta(T^{m(k)+j-1}x,T^{m(k)+j}x) < \frac{\delta}{2}\nonumber \\
& \text{ for all }k\ge k_0, j\geq 0.
\end{align}
From Inequalities (\ref{b2}),\eqref{b2-3} and (\ref{b3}), we obtain:
\begin{align}
\Delta(T^{n(k)+j}x,T^{m(k)+j}x)&\le \theta\big(\Delta(T^{n(k)+j-1}x,T^{m(k)+j-1}x)\big),\nonumber\\
 &\text{ for all }j=1,2,\dots,p \text{ and } k\ge k_0.
\end{align}
This implies that for all \(k\ge k_0\), we have:
\begin{align}\label{b4}
\Delta(T^{n(k)+p}x,T^{m(k)+p}x)&<\theta\big(\Delta(T^{n(k)+p-1}x,T^{m(k)+p-1}x)\big)\nonumber\\
&<\theta^{2}\big(\Delta(T^{n(k)+p-2}x,T^{m(k)+p-2}x)\big)\nonumber\\
&<\dots\nonumber\\
&<\theta^{p}\big(\Delta(T^{n(k)}x,T^{m(k)}x)\big).
\end{align}
There exists $M>0$ such that:
\begin{equation}\label{b5}
\Delta(T^{n(k)}x,T^{m(k)}x)\leq M \text{ for all } k\in \mathbb{N}.
\end{equation}
Combining (\ref{b4}) and (\ref{b5}), we get:
\begin{equation}
\Delta(T^{n(k)+p}x,T^{m(k)+p}x)\leq \theta^{p}(M) \text{ for all }k\ge k_0.
\end{equation}
From \eqref{b12}
we get:
\begin{align*}
\Delta(T^{n(k)+p}x, T^{m(k)}x)& \leq s \{\theta^{p}(M)+ \Delta(T^{m(k)+p}x, T^{m(k)}x)\} \\
&\quad + c \cdot \Delta\left(T^{n(k)+p}x, \, T^{m(k)+p}x\right) \Delta\left(T^{m(k)+p}x, \, T^{m(k)}x\right).\\
&\quad\text{ for all }k\ge k_0.
\end{align*}
 This implies that the sequence \(\{\Delta(T^{n(k)+p}x, T^{m(k)}x) \}\) is bounded. We observe that, each term on the left-hand side of \eqref{b1}, except for the second term, approaches \(0\) as \(k\to\infty\). Therefore, taking limit inferior  of both sides of \eqref{b1} as \(k\to \infty\), we deduce that:
\begin{align*}
L\leq \liminf_{k\to \infty}\Delta(T^{n(k)}x, T^{m(k)}x) &\le  \liminf_{k\to\infty} s^{2} \Delta(T^{n(k)+p}x, T^{m(k)+p}x)\\
&\leq s^{2} \ \theta^{p}\big(M\big)
\end{align*}
Taking limit as \(p\to \infty\)  implies that \(L=0\),  a contradiction. Therefore, \(\{T^{n}x\}\) is a Cauchy sequence. Since \((X, \Delta)\) is complete, the sequence converges to some point \(z \in X\). Suppose that \(\Delta(z,Tz)>0\), then we have:
\begin{align}
\Delta(T^{n+1}x, Tz) \le\theta(\max\{\Delta(T^{n}x,z),\Delta(T^{n}x,T^{n+1}x),\Delta(z,Tz)\}).
\end{align}
Since the first two sequences on the right-hand side of above inequality tend to \(0\) as \(n\to\infty\), there exists a non-negative integer \(m_0\) such that:
\begin{align}
\Delta(T^{n+1}x, Tz) \le\theta(\Delta(z,Tz)) \text{ for all }n\ge m_0. 
\end{align}
If $\Delta$ is separately continuous, then taking the limit as \( n \to \infty \) yields:
\begin{align}
\Delta(z, Tz) \le\theta(\Delta(z,Tz))<\Delta(z,Tz),
\end{align}
 this contradiction implies that \( \Delta(z, Tz) = 0 \), so \( z \) is a fixed point of \( T \). If \(T\) is continuous, then:
\[
T^{n}x\to z,T^{n+1}x\to Tz \text{ as } n\to \infty.
\]
Since, in a \(b\)-suprametric space the limit is unique, this implies that \(z=Tz\) and the proof is complete.
To prove the uniqueness of the fixed point, suppose \( x \) and \( y \) are two fixed points of \( T \). If \( x \neq y \), then by the contraction condition, we have:
\begin{align}
\Delta(x, y) = \Delta(Tx, Ty) &\leq \theta\left( \max\{\Delta(x, y), \Delta(x, Tx), \Delta(y, Ty)\} \right) \nonumber\\
&= \theta\left(\Delta(x, y)\right) <\Delta(x,y) .\nonumber 
\end{align}
This leads to a contradiction, as \( \Delta(x, y) < \Delta(x, y) \) is impossible. Therefore, \( x = y \), and the fixed point is unique. This completes the proof.
\end{proof}
                    
\begin{cor}\label{strong-bsupra-gen-matko}
Let \((X, \Delta)\) be a complete strong \(b\)-suprametric space or a complete suprametric space  , \(\theta\in \Theta_{1}\), and \(T : X \to X\) be a mapping that  $T$ has bounded orbits, and satisfies the following contraction:
\begin{equation}
\Delta(Tx, Ty) \leq \theta(\max\{\Delta(x, y),\Delta(x, Tx),\Delta( y,Ty)\})\quad \text{for all } x,y\in X.
\end{equation}
Then, \(T\) has a unique fixed point \(z\), and for each \(x \in X\), the sequence \(\{T^n x\}\) converges to \(z\).
\end{cor}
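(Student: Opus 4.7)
The plan is to derive this corollary as a direct specialization of Theorem \ref{bsupra-gen-matko}. Two observations suffice: (a) both classes of spaces in the hypothesis are $b$-suprametric spaces, and (b) the continuity requirement (ii) of Theorem \ref{bsupra-gen-matko} is automatically met in both settings. Since the contraction condition and the bounded-orbit assumption are explicitly carried over from the corollary, no additional work on the iteration or Cauchy-estimate side is needed.

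For (a), a suprametric is exactly a $b$-suprametric with $s=1$ by definition. For a strong $b$-suprametric, the defining inequality
\[
\Delta(x, y) \leq s\Delta(x, z) + \Delta(z, y) + c\,\Delta(x, z)\Delta(z, y)
\]
is majorized, since $s \geq 1$, by
\[
\Delta(x, y) \leq s\bigl(\Delta(x, z) + \Delta(z, y)\bigr) + c\,\Delta(x, z)\Delta(z, y),
\]
which is the $b$-suprametric condition with the same parameters $s$ and $c$. Completeness is preserved, since the notion of Cauchy sequence and convergence depends only on $\Delta$, not on which inequality one uses.

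For (b), I would invoke the continuity results recalled in the introduction: by \cite{berzig2024strong}, strong $b$-suprametrics are continuous, and by \cite{berzig2022first}, suprametrics are continuous in each variable. In either case, the second alternative in hypothesis (ii) of Theorem \ref{bsupra-gen-matko} is satisfied. With (a) and (b) established, applying Theorem \ref{bsupra-gen-matko} immediately yields both the existence and uniqueness of a fixed point $z$ and the convergence of every Picard sequence $\{T^n x\}$ to $z$.

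There is no genuine obstacle here; the corollary is essentially a bookkeeping consequence of the main theorem. The only subtlety worth flagging is that one should state explicitly (or cite) the separate-continuity property for both strong $b$-suprametrics and suprametrics, so that the appeal to hypothesis (ii) of Theorem \ref{bsupra-gen-matko} is transparent to the reader rather than left implicit.
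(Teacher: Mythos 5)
Your proposal is correct and follows essentially the same route as the paper: both space classes are recognized as complete $b$-suprametric spaces whose metrics are continuous (citing \cite{berzig2024strong} and \cite{berzig2022first}), so hypothesis (ii) of Theorem \ref{bsupra-gen-matko} holds and the conclusion follows immediately. Your write-up is in fact slightly more careful than the paper's one-line argument, since you verify explicitly that a strong $b$-suprametric satisfies the $b$-suprametric inequality and you cite the continuity of strong $b$-suprametrics rather than of strong $b$-metrics.
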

\begin{proof}Every suprametric or strong \(b\)-metric  is jointly continuous. Therefore, the proof follows from Theorem \ref{bsupra-gen-matko}.
\end{proof}
We deduce  Theorem 3.2. \cite{berzig2024nonlinear} and Theorem 2.1 of \cite{berzig2024strong} as a direct corollary of our result as follows.
\begin{cor}(\cite{berzig2024nonlinear}, Theorem 3.2.)
Let \((X, \Delta)\) be a complete \(b\)-suprametric space, \(\theta\in \Theta_{1}\), and \(T : X \to X\) be a mapping that $T$ has bounded orbits, and satisfies the following contraction:
\begin{equation}
\Delta(Tx, Ty) \leq \theta(\Delta(x, y))\quad \text{for all } x,y\in X\nonumber.
\end{equation}
 Then, \(T\) has a unique fixed point \(z\), and for each \(x \in X\), the sequence \(\{T^n x\}\) converges to \(z\).
\end{cor}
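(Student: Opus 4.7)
The plan is to deduce this corollary by verifying the three hypotheses of Theorem \ref{bsupra-gen-matko} and then invoking it directly. For the contraction condition (i), I would observe that for all $x,y\in X$,
\[
\Delta(x,y) \leq \max\{\Delta(x,y),\, \Delta(x,Tx),\, \Delta(y,Ty)\}.
\]
Since $\theta$ is non-decreasing, the hypothesis $\Delta(Tx,Ty) \leq \theta(\Delta(x,y))$ immediately yields
\[
\Delta(Tx,Ty) \leq \theta\bigl(\max\{\Delta(x,y),\Delta(x,Tx),\Delta(y,Ty)\}\bigr),
\]
which is precisely \eqref{fee-matk1}.

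For the continuity hypothesis (ii), I would show that $T$ itself is continuous, so that the first alternative of (ii) applies even without any regularity of $\Delta$. Since $\theta \in \Theta_1$ is non-decreasing, the remark after the definition of $\Theta_1$ (Lemma 2.1 in \cite{samet2012fixed}) gives $\theta(t) < t$ for every $t>0$; combined with $\theta \geq 0$ and monotonicity, a short argument shows $\theta(t)\to 0$ as $t\to 0^{+}$ (otherwise $\theta^{n}(t)$ would be bounded below by a positive constant for all $t>0$, contradicting $(\Theta_1)$). Hence if $\Delta(x_n,x)\to 0$, the contraction condition gives $\Delta(Tx_n,Tx)\leq \theta(\Delta(x_n,x))\to 0$, so $T$ is continuous.

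Finally, the bounded orbits hypothesis (iii) is assumed verbatim in the corollary's statement, so every hypothesis of Theorem \ref{bsupra-gen-matko} is met. Applying that theorem yields a unique fixed point $z \in X$ to which every Picard sequence $\{T^{n}x\}$ converges, which is exactly the stated conclusion.

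There is essentially no main obstacle here; the only non-cosmetic point is the verification that $\theta(t)\to 0$ as $t\to 0^{+}$, which gives continuity of $T$ and lets us bypass any assumption on continuity of $\Delta$ in a general $b$-suprametric space. The argument is otherwise a direct specialization, which is why the statement is presented as a corollary.
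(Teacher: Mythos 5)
Your proposal is correct and follows essentially the same route as the paper: the paper's proof simply notes that \(T\) is continuous and then invokes Theorem \ref{bsupra-gen-matko}, exactly as you do. Your only addition is to spell out why the contraction condition forces \(\theta(t)\to 0\) as \(t\to 0^{+}\) (hence continuity of \(T\)), a detail the paper leaves implicit but which your argument justifies correctly.
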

\begin{proof}
We observe that \(T\) is continuous, and thus, the proof follows from Theorem \ref{bsupra-gen-matko}.
\end{proof}
Since every \(b\)-metric is a \(b\)-suprametric, we deduce the following {\'C}iri{\'c}-type fixed point theorem in the setting of \(b\)-metric spaces.
\begin{cor}\label{cirictype-bmetric}
Let \((X, \Delta)\) be a complete \(b\)-metric space, \(\theta\in \Theta_{1}\), and \(T : X \to X\) be a mapping that $T$ has bounded orbits, and  satisfies the following contraction:
\begin{equation}
\Delta(Tx, Ty) \leq \theta(\max\{\Delta(x, y),\Delta(x, Tx),\Delta( y,Ty)\})\quad \text{for all } x,y\in X\nonumber.
\end{equation}
 If \(T\) is continuous, or $\Delta$ is separately continuous,  then, \(T\) has a unique fixed point \(z\), and for each \(x \in X\), the sequence \(\{T^n x\}\) converges to \(z\).
\end{cor}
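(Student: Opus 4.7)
The plan is to show that this corollary is an immediate specialization of Theorem \ref{bsupra-gen-matko} obtained by setting the suprametric constant $c$ equal to $0$. Indeed, comparing the inequalities in Definition \ref{metrics}, a $b$-metric $\Delta$ with parameter $s$ satisfies
\[
\Delta(x,y) \leq s\bigl(\Delta(x,z) + \Delta(z,y)\bigr) + 0 \cdot \Delta(x,z)\,\Delta(z,y),
\]
so $(X, \Delta)$ is automatically a $b$-suprametric space with the same $s$ and with $c = 0$. Completeness is a property of the underlying metric notions of Cauchy sequence and convergence, and these coincide for the $b$-metric and the induced $b$-suprametric, so $(X,\Delta)$ is a complete $b$-suprametric space.

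Next I would check each hypothesis of Theorem \ref{bsupra-gen-matko}. The contraction condition (i) is identical to the one assumed here, since the right-hand side is exactly $\theta(M(x,y))$. The bounded-orbits hypothesis (iii) is assumed verbatim. For the continuity hypothesis (ii), the corollary assumes either that $T$ is continuous, which is one of the two alternatives in Theorem \ref{bsupra-gen-matko}, or that $\Delta$ is separately continuous; the latter falls under the clause ``$\Delta$ is continuous in each variable'' in Theorem \ref{bsupra-gen-matko}. Hence all three hypotheses (i)--(iii) of that theorem hold.

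Applying Theorem \ref{bsupra-gen-matko} then yields both conclusions simultaneously: for every $x \in X$ the Picard iterates $\{T^n x\}$ converge to some $\xi \in X$, and $\xi$ is the unique fixed point of $T$. Setting $z = \xi$ gives the statement of Corollary \ref{cirictype-bmetric}, so no additional argument is required. There is no substantive obstacle here; the only point worth emphasizing in writing is the observation that $b$-metrics embed into $b$-suprametrics via $c = 0$, which is precisely the reason the $b$-suprametric framework developed in the previous theorem subsumes the $b$-metric setting.
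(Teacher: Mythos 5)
Your proposal is correct and follows exactly the paper's route: the paper derives this corollary by noting that every \(b\)-metric is a \(b\)-suprametric (taking \(c=0\)) and then invoking Theorem \ref{bsupra-gen-matko}, whose continuity alternative ``\(\Delta\) is continuous in each variable'' is used in the proof precisely in the sense of separate continuity, as you observe. No further comment is needed.
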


Replacing \(\theta \in \Theta_2\) with \(\theta \in \Theta_1\), and assuming $T$  has bounded orbits, we deduce  Theorem 2.2 in \cite{karapinar2025some} as a direct result of Corollary \ref{cirictype-bmetric} as follows:
\begin{cor}\label{gen-iterpol}
Let \(0<\alpha<1,c\ge0\) and \((X, \Delta)\) be a complete \(\alpha-c\)-interpolative-metric space, \(\theta\in \Theta_{1}\), and \(T : X \to X\) be a mapping $T$ has bounded orbits, and that satisfies the following contraction:
\begin{equation}\label{fee-interpol}
\Delta(Tx, Ty) \leq \theta(\max\{\Delta(x, y),\Delta(x, Tx),\Delta( y,Ty)\})\quad \text{for all } x,y \in X.
\end{equation}
Then, \(T\) has a unique fixed point \(z\) and for each \(x \in X\), the sequence \(\{T^n x\}\) converges \(z\).
\end{cor}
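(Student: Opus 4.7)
The plan is to reduce Corollary \ref{gen-iterpol} directly to Corollary \ref{cirictype-bmetric} by means of Lemma \ref{not_new}. First, I would invoke Lemma \ref{not_new} to recognize the $\alpha$-$c$-interpolative metric $\Delta$ as a $b$-metric with index $s=\max\{1+c\alpha,\,1+c(1-\alpha)\}$. Since the notions of Cauchy sequence, convergence, completeness, and boundedness depend only on the function $\Delta$ itself and not on which triangle-type inequality it satisfies, the space $(X,\Delta)$ is automatically a complete $b$-metric space in which $T$ still has bounded orbits and still satisfies the \'Ciri\'c-type contraction \eqref{fee-interpol}.

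The only hypothesis of Corollary \ref{cirictype-bmetric} that is not immediate is the continuity requirement on either $T$ or $\Delta$; I would therefore establish that every interpolative metric is separately continuous. Given $\Delta(y_n,y)\to 0$, the interpolative inequality applied at the triple $(x,y,y_n)$ gives
\[
\Delta(x,y_n)\leq \Delta(x,y)+\Delta(y,y_n)+c\,\Delta(x,y)^{\alpha}\Delta(y,y_n)^{1-\alpha},
\]
whose right-hand side tends to $\Delta(x,y)$ because $1-\alpha>0$; hence $\limsup_{n}\Delta(x,y_n)\leq \Delta(x,y)$ and the sequence $\{\Delta(x,y_n)\}$ is eventually bounded by some $M$. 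Applying the same inequality with the roles interchanged yields
\[
\Delta(x,y)\leq \Delta(x,y_n)+\Delta(y_n,y)+c\,\Delta(x,y_n)^{\alpha}\Delta(y_n,y)^{1-\alpha},
\]
and using $\Delta(x,y_n)^{\alpha}\leq M^{\alpha}$ together with $\Delta(y_n,y)^{1-\alpha}\to 0$, I obtain $\liminf_{n}\Delta(x,y_n)\geq \Delta(x,y)$. Combining the two estimates yields $\Delta(x,y_n)\to \Delta(x,y)$, so $\Delta$ is separately continuous.

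With the complete $b$-metric structure in place, separate continuity of $\Delta$ verified, bounded orbits given, and the \'Ciri\'c-type contraction \eqref{fee-interpol} with $\theta\in\Theta_{1}$, all hypotheses of Corollary \ref{cirictype-bmetric} are satisfied. Applying it produces a unique fixed point $z\in X$ together with the convergence $T^{n}x\to z$ for every $x\in X$, which completes the proof. I expect the main obstacle to be the separate-continuity step, as it is the only place where one must exploit the interpolative structure rather than merely the weaker $b$-metric inequality derived from Young's inequality in Lemma \ref{not_new}; once this is in hand, the rest is a mechanical translation into the $b$-metric setting.
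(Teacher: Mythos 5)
Your proof is correct and follows the same overall route as the paper: apply Lemma \ref{not_new} to view the interpolative metric as a $b$-metric and then invoke Corollary \ref{cirictype-bmetric}. The genuine difference is that you verify the continuity hypothesis of Corollary \ref{cirictype-bmetric}, which the paper's one-line proof silently skips: Corollary \ref{cirictype-bmetric} requires either continuity of $T$ or separate continuity of $\Delta$, and Corollary \ref{gen-iterpol} as stated assumes neither. Your argument that every $\alpha$-$c$ interpolative metric is separately continuous is sound — the upper estimate $\Delta(x,y_n)\le \Delta(x,y)+\Delta(y,y_n)+c\,\Delta(x,y)^{\alpha}\Delta(y,y_n)^{1-\alpha}$ gives $\limsup_n\Delta(x,y_n)\le\Delta(x,y)$ together with boundedness of $\{\Delta(x,y_n)\}$, and the reversed application with $y_n$ as the intermediate point, combined with that bound, gives $\liminf_n\Delta(x,y_n)\ge\Delta(x,y)$ — and it correctly exploits the interpolative structure (the exponent $1-\alpha>0$) rather than the weaker $b$-metric inequality obtained from Young's inequality, under which separate continuity can fail. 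So your proposal is not merely equivalent to the paper's proof; it supplies the missing verification that makes the reduction to Corollary \ref{cirictype-bmetric} legitimate, whereas the paper's proof as written leaves a gap (or tacitly needs this continuity fact).
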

\begin{proof}By Lemma \ref{not_new}, $\Delta$ is a \(b\)-metric. Therefore, by Corollary \ref{cirictype-bmetric}, \(T\) has a unique fixed point and for each \(x \in X\), the sequence \(\{T^n x\}\) converges to the fixed point of \(T\).
\end{proof}
We deduce Theorem 1 in \cite{czerwik1993contraction}, or  Theorem 1 in \cite{kajanto2018note},  as a direct result of Corollary \ref{cirictype-bmetric} as follows.

\section{Discussion}\label{sec12}
We demonstrated that interpolative metric spaces are \( b \)-metric spaces. However, rather than presenting our fixed-point theorem in this setting, we established it within the framework of \(b\)-suprametric spaces, which we proved to be more general than \( b \)-metric spaces.

\section{Conclusion}
 We provided an explicit example to show that $b$-suprametrics are more general than $b$-metrics. Additionally, we unified interpolative metric spaces as a special case of $b$-metrics. In this work, we provided a Matkowski-type fixed point theorem to generalized contractions of \'{C}iri\'{c} type in $b$-suprametric spaces, demonstrating their broader applicability compared to $b$-metric spaces. Our findings generalize several theorems in the literature and offer new insights into fixed point theory in generalized metric spaces, paving the way for future research.
\backmatter

\section*{Declarations}
We affirm that we have no competing interests.







\bibliography{sn-bibliography}

\end{document}